\documentclass{article}
\usepackage{amsmath,amssymb,amsthm}
\usepackage{geometry}
\usepackage{booktabs}
\usepackage{hyperref}

\theoremstyle{definition}
\newtheorem{definition}{Definition}[section]
\newtheorem{condition}{Condition}[section]
\newtheorem{proposition}{Proposition}[section]
\newtheorem{lemma}{Lemma}[section]
\newtheorem{theorem}{Theorem}[section]

\newcommand{\hparam}{\hbar}    
\newcommand{\hscale}{h}        
\newcommand{\norm}[1]{\| #1 \|}
\usepackage{enumitem}

\hypersetup{
    colorlinks=true,
    linkcolor=blue,
    citecolor=green,
    urlcolor=cyan
}

\begin{document}

\title{From Weak Nonlinear Perturbation to the Homotopy Analysis Method: A Rigorous Derivation and Theoretical Unification}
\author{Hang Xu \\
        State Key Lab of Ocean Engineering, School of Ocean and Civil Engineering \\
        Shanghai Jiao Tong University, Shanghai 200240, China \\
        \texttt{hangxu@sjtu.edu.cn}
}
\date{\today}  
\maketitle

\noindent \textbf{Abstract:}
The Homotopy Analysis Method (HAM) is a widely used analytical approach for nonlinear problems, but its theoretical foundation lacks clear rigor. Its correlation with perturbation theory remains ambiguous, causing confusion in existing literature. This study shows that the method’s basic homotopy deformation equation can be naturally derived from weak-nonlinearity perturbation theory. It constructs a specific expression and optimizes core parameters (optimal auxiliary linear operator, convergence-control parameter, auxiliary function) to mitigate the inherent strong nonlinearity of the nonlinear operator.
Extending perturbation theory’s small parameter $\varepsilon$ to [0,1] enables systematic homotopy deformation, connecting the linear auxiliary system (at $\varepsilon=0$) with the original nonlinear problem (at $\varepsilon=1$) and confirming this method as a structured, adaptive generalization of classical perturbation theory.
Furthermore, this study rigorously proves that the Homotopy Perturbation Method (HPM) is a special case of the above-mentioned approach. It can be directly derived by fixing the optimal auxiliary linear operator as the nonlinear system’s linear component, and setting the convergence-control parameter and auxiliary function to specific values, making it a degenerate form of the original method.
This study clarifies the perturbation-theoretic origin of the method, defines the hierarchical subordination of perturbation methods to it, unifies the theoretical framework of homotopy-based nonlinear analytical methods, rectifies common misconceptions in existing literature, and provides guidance for their rational application, comparative analysis and further development.

\vspace{2mm}
\noindent \textbf{Keywords}: {Homotopy Analysis Method; Perturbation Theory; Homotopy Perturbation Method; Weak Nonlinearity; Convergence Control}

\section{Introduction}

Nonlinear phenomena are pervasive in applied mathematics and engineering science, presenting core challenges for understanding natural phenomena and optimizing engineering systems \cite{Nayfeh1981}. Traditional small-parameter perturbation methods linearize nonlinear terms through intrinsic small parameters and have been extensively applied to weakly nonlinear problems including small-amplitude vibrations and low-Reynolds-number flows \cite{Nayfeh2000}. However, their effectiveness diminishes for strongly nonlinear problems lacking obvious small parameters. 
To overcome this limitation, researchers have developed extended perturbation methods, with singular perturbation and multi-scale perturbation methods being particularly significant \cite{Bender1999, Kevorkian1996}. Singular perturbation methods address problems where small parameters multiply highest-order derivative terms, leading to boundary layer effects that traditional methods cannot capture \cite{Bender1999}. The core approach involves constructing separate outer and inner solutions matched through asymptotic principles to form uniformly valid approximations. Prandtl's application of singular perturbation theory to establish boundary layer equations exemplifies its foundational role in fluid mechanics \cite{Prandtl1904}.
Multi-scale perturbation methods, including the method of multiple scales and averaging methods, solve nonlinear vibration problems with multiple temporal or spatial scales \cite{Nayfeh2000}. By introducing different scale variables, these methods decompose original problems into scale-specific equations, effectively eliminating secular terms and obtaining uniformly valid asymptotic solutions over extended periods. Nayfeh's systematic refinement of multiple scales methodology established it as a core tool for nonlinear vibration analysis in aerospace and mechanical systems \cite{Nayfeh1981, Nayfeh2000}.

Beyond perturbation methods, alternative analytical approximation approaches have emerged. The Adomian Decomposition Method (ADM) decomposes nonlinear terms into Adomian polynomials, overcoming small-parameter limitations for nonlinear differential and integral equations \cite{Adomian1981}. The Variational Iteration Method (VIM) uses variational principles to construct correction functionals, yielding high-precision solutions with fewer iterations \cite{He1999var}. The Perturbation-Iteration Method (PIM) integrates perturbation theory and iterative techniques for specific nonlinear cases \cite{Cveticanin2004}.
Against this backdrop, Liao’s Homotopy Analysis Method (HAM) advances nonlinear analysis significantly \cite{Liao1992}. By incorporating topological homotopy and a convergence-control parameter in its zeroth-order deformation equation, it precisely regulates solution series convergence, addressing a core limitation of traditional methods \cite{Liao2003}. Its versatility is validated in fluid mechanics and time-delay dynamical systems, with foundational works widely cited and a large research community established \cite{Liao2015, Liao2005}. Six years later, the Homotopy Perturbation Method (HPM) was proposed as another homotopy-inspired analytical method for solving nonlinear problems\cite{He1998}. Although it adeptly integrates the topological ideas of homotopy with classical perturbation techniques and eliminates the dependence of traditional perturbation methods on artificial small parameters, whether the HPM and the HAM belong to the same category of analytical methods remains a contentious issue in academia to this day \cite{he2004,liao2005b}.

To quantitatively assess the academic influence of HAM and HPM, we extracted publication and citation data from the Web of Science Core Collection (as of Jan. 10, 2026). Searches were conducted using the full official names of the two methods, namely the `Homotopy Analysis Method' and the `Homotopy Perturbation Method', without document type filtering, to trace the scholarly impact of each method since its inception.
HAM has established a more substantial academic footprint: from 1992 to 2026, 6,465 relevant publications were recorded, with a cumulative total of 116,224 non-self-citations for all HAM-related works. Its applications span fluid mechanics, solid mechanics, control theory, applied mathematics and engineering, cementing its status as an influential analytical approach in nonlinear science. As a streamlined variant of HAM, HPM features a simplified formulation and straightforward implementation and has also garnered significant academic recognition. From 1998 to 2026, 5,235 HPM-related publications were retrieved under the same criteria, with the total non-self-citations of all HPM-related works reaching 90,456. HPM-related research involves interdisciplinary authors from mathematics, mechanics, electronics and materials science, reflecting its wide acceptance and far-reaching impact in relevant academic communities. The two homotopy-based methods boast enormous academic influence, with a combined total of 11,700 related publications and a cumulative 206,680 non-self-citations.

Targeting the long-standing theoretical controversies and gaps of HAM, this study systematically consolidates its theoretical foundation and clarifies its relations with related analytical approaches. It addresses two core unresolved issues: the rigorous derivation of its homotopy deformation equation (especially the multiplicative form) and its genuine independence from classical perturbation theory. By extending perturbation theory’s intrinsic small parameter $\varepsilon$ to [0,1], this framework enables the systematic homotopy deformation of the method. Unlike classical perturbation relying on fixed problem-specific small parameters, it uses this extended parameter to connect the linear auxiliary system ($\varepsilon=0$) and the original nonlinear problem ($\varepsilon=1$). 
Regulated by the convergence-control parameter $\hscale$, this innovation elevates the method beyond traditional perturbation while retaining its theoretical essence. Theoretical exploration and rigorous derivation clarify its theoretical links, fill research gaps, and lay a solid foundation for its improvement and application. Meanwhile, fixing key parameters (the optimal linear operator, homotopy parameter, and auxiliary function) rigorously proves that HPM is a special case of HAM. Such fixation removes HAM’s convergence control flexibility and auxiliary operator optimization, making HPM a simpler but more restrictive alternative in practice. This clarifies the hierarchical subordination of HPM to HAM, unifies the theoretical framework of homotopy-based nonlinear analytical methods, corrects common cognitive biases in existing literature, and provides clear guidance for the rational application, comparison, and innovation of related methods.

\section{Perturbation-Based Formulation of Nonlinear Systems}

\subsection{Homotopy Transformation via Perturbation Analysis}

Consider a smooth nonlinear system defined on a domain \( \Omega \subset \mathbb{R}^n \) with independent variable \( \mathbf{r} \in \Omega \):
\begin{equation}\label{eq:sec2_1}
\mathcal{F}(u) = L(u) + N(u) - s(\mathbf{r}) = 0,
\end{equation}
where \( u \in \mathbb{X} \) denotes the unknown function belonging to a suitable Banach space \( \mathbb{X} \) (typically consisting of sufficiently smooth functions). The operator \( \mathcal{F}: \mathbb{X} \to \mathbb{Y} \) is the complete nonlinear differential operator governing the system, decomposed into a linear part \( L(u) \) (with \( L \) a linear differential operator) and a nonlinear part \( N(u) \) (with \( N \) a nonlinear differential operator). The known analytic function \( s(\mathbf{r}) \in C^\omega(\Omega) \) represents the source term.

For any nonzero constant \( \hscale \in \mathbb{R} \setminus \{0\} \) and nonzero auxiliary function \( H(\mathbf{r}) \in C^\omega(\Omega) \), Equation \eqref{eq:sec2_1} is algebraically equivalent to:
\begin{equation}\label{eq:sec2_2}
\hscale H(\mathbf{r}) \mathcal{F}(u) = 0.
\end{equation}
Notably, \( \mathcal{F}(u) \) is not required to be weakly nonlinear, a key advantage that distinguishes this formulation from traditional perturbation methods.

Introduce a bounded linear operator \( \mathcal{L}_{\text{opt}}: \mathbb{X} \to \mathbb{Y} \), defined as the \emph{optimal linear approximation} of \( \mathcal{F} \) near a reference solution. Assume the homogeneous equation \( \mathcal{L}_{\text{opt}}(u) = 0 \) admits a unique solution \( u_0 \in \mathbb{X} \) (the \emph{linear reference solution}), satisfying \( \mathcal{L}_{\text{opt}}(u_0) = 0 \).

To construct a `linear core + perturbation' structure, add and subtract \( \hscale H(\mathbf{r}) \mathcal{L}_{\text{opt}}(u) \) to the left-hand side of Equation \eqref{eq:sec2_2}:
\begin{equation}\label{eq:sec2_3}
\hscale H(\mathbf{r}) \mathcal{L}_{\text{opt}}(u) - \hscale H(\mathbf{r}) \mathcal{L}_{\text{opt}}(u) + \hscale H(\mathbf{r}) \mathcal{F}(u) = 0.
\end{equation}
Let \( \varepsilon \ll 1 \) be a small dimensionless parameter quantifying nonlinearity strength, and \( \hscale = O(1) \) a scaling constant. Rearranging Equation \eqref{eq:sec2_3} yields the standard weak-nonlinear perturbation form:
\begin{equation}\label{eq:sec2_4}
\mathcal{L}_{\text{opt}}(u) + \varepsilon \left( \hscale H(\mathbf{r}) \mathcal{F}(u) - \mathcal{L}_{\text{opt}}(u) \right) = 0.
\end{equation}
The perturbation term \( \varepsilon \left( \hscale H(\mathbf{r}) \mathcal{F}(u) - \mathcal{L}_{\text{opt}}(u) \right) \) inherits weak nonlinearity from the constructed combination, even if \( \mathcal{F}(u) \) itself is strongly nonlinear.

Define the deviation variable \( v = u - u_0 \), satisfying \( \norm{v} \ll 1 \) under weak nonlinearity. By linearity of \( \mathcal{L}_{\text{opt}} \):
\[
\mathcal{L}_{\text{opt}}(u) = \mathcal{L}_{\text{opt}}(u_0 + v) = \mathcal{L}_{\text{opt}}(u_0) + \mathcal{L}_{\text{opt}}(v) = \mathcal{L}_{\text{opt}}(v),
\]
since \( \mathcal{L}_{\text{opt}}(u_0) = 0 \). Substituting into Equation \eqref{eq:sec2_4} gives:
\begin{equation}\label{eq:sec2_6}
\mathcal{L}_{\text{opt}}(v) + \varepsilon \left( \hscale H(\mathbf{r}) \mathcal{F}(u) - \mathcal{L}_{\text{opt}}(v) \right) = 0.
\end{equation}
Rearranging terms yields the core perturbation equation for weak nonlinear analysis:
\begin{equation}\label{eq:sec2_7}
(1 - \varepsilon) \mathcal{L}_{\text{opt}}(v) = -\varepsilon \hscale H(\mathbf{r}) \mathcal{F}(u).
\end{equation}

\subsection{Validity Conditions for Weak Nonlinearity}

Equation \eqref{eq:sec2_4} holds if the perturbation term is asymptotically smaller than the linear term. This is guaranteed by three core conditions, even for strongly nonlinear \( \mathcal{F}(u) \):

\begin{condition}[Small-Parameter Condition]
For the true nonlinear solution \( u \) (satisfying \( \mathcal{F}(u) = 0 \)), the perturbation term magnitude simplifies to:
\[
\norm{\varepsilon \left( \hscale H(\mathbf{r}) \mathcal{F}(u) - \mathcal{L}_{\text{opt}}(u) \right)} = \varepsilon \norm{\mathcal{L}_{\text{opt}}(u)}.
\]
Since \( \varepsilon \ll 1 \) and \( \mathcal{L}_{\text{opt}}(u) \neq 0 \), the perturbation is genuinely smaller than the linear term.
\end{condition}

\begin{condition}[Bounded Nonlinearity]
Both \( \mathcal{F}(u) \) (locally near \( u_0 \)) and \( \mathcal{L}_{\text{opt}} \) are bounded operators. This ensures \( \hscale H(\mathbf{r}) \mathcal{F}(u) - \mathcal{L}_{\text{opt}}(u) \) does not dominate the linear core \( \mathcal{L}_{\text{opt}}(u) \).
\end{condition}

\begin{condition}[Optimal Linear Approximation]
Define \( \mathcal{L}_{\text{opt}} \) as the Fréchet derivative of \( \mathcal{F} \) at \( u_0 \), i.e., \( \mathcal{L}_{\text{opt}} = D\mathcal{F}(u_0) \), where \( D\mathcal{F}(u_0): \mathbb{X} \to \mathbb{Y} \) denotes the Fréchet derivative operator. Then:
\[
\mathcal{L}_{\text{opt}}(u - u_0) = D\mathcal{F}(u_0)(u - u_0),
\]
and the residual \( \mathcal{F}(u) - \mathcal{L}_{\text{opt}}(u - u_0) = O(\norm{u - u_0}^2) \) is higher-order, justifying the linear approximation.
\end{condition}

Substituting \( v = u - u_0 \) into Equation \eqref{eq:sec2_7} gives the perturbation-based homotopy equation:
\begin{equation}\label{HAMDeqs}
(1 - \varepsilon) \mathcal{L}_{\text{opt}}(u - u_0) = -\varepsilon \hscale H(\mathbf{r}) \mathcal{F}(u).
\end{equation}
This equation balances a scaled linear term against a nonlinear perturbation, with validity rooted in the weak nonlinearity of \( \mathcal{L}_{\text{opt}}(u) - \hscale H(\mathbf{r}) \mathcal{F}(u) \).

\section{Analytic Continuation with Extension to \texorpdfstring{$\varepsilon \in [0,1]$}{epsilon in [0,1]}}
\label{sec:analytic_continuation_epsilon}

The small parameter $\varepsilon \ll 1$ is a local starting point, not an inherent system constraint. We extend $\varepsilon$ to $[0,1]$ via analytic continuation, validated by algebraic invariance and operator smoothness.

\begin{definition}[Algebraic Invariance]
An operator equation is algebraically invariant with respect to $\varepsilon \in \mathbb{R}$ if derived via reversible algebraic manipulations (addition, subtraction, rearrangement) holding for all $\varepsilon$, independent of magnitude.
\end{definition}

Equation \eqref{eq:sec2_6} follows from algebraic transformations of Equation \eqref{eq:sec2_2}, so it is invariant for all $\varepsilon \in \mathbb{R}$. Substituting $v = u - u_0$ (and leveraging $\mathcal{L}_{\text{opt}}(u_0) = 0$) eliminates $u_0$'s independent contribution from the linear term, yielding:
\begin{equation}\label{eq:sec2_8}
(1 - \varepsilon) \mathcal{L}_{\text{opt}}(u - u_0) = -\varepsilon \hscale H(\mathbf{r}) \mathcal{F}(u),
\end{equation}
valid for all $\varepsilon \in \mathbb{R}$. Here, $u_0$ remains as part of the deviation from the linear reference solution, but its standalone effect on the linear operator is eliminated.

\begin{definition}[Fréchet Differentiability]
A nonlinear operator $\mathcal{F}: \mathbb{X} \to \mathbb{Y}$ is Fréchet differentiable at $u \in \mathbb{X}$ if there exists a bounded linear operator $D\mathcal{F}(u): \mathbb{X} \to \mathbb{Y}$ such that:
\[
\lim_{\norm{h} \to 0} \frac{\norm{\mathcal{F}(u+h) - \mathcal{F}(u) - D\mathcal{F}(u)h}}{\norm{h}} = 0.
\]
The operator $\mathcal{F}$ is said to be $C^\infty$-smooth if $D\mathcal{F}(u)$ is continuous on $\mathbb{X}$.
\end{definition}

The considered system is smooth, characterized by the following key regularity properties:
\begin{enumerate}[label=(\arabic*), leftmargin=*]
  \item $\mathcal{F}, L, N, \mathcal{L}_{\text{opt}} \in C^\infty(\mathbb{X}, \mathbb{Y})$: All core operators are $C^\infty$-smooth mappings between Banach spaces $\mathbb{X}$ and $\mathbb{Y}$, excluding singularities in the underlying operator structure.
  \item $D\mathcal{F}(u), D\mathcal{L}_{\text{opt}}(u) \in C(\mathbb{X}, \mathcal{L}(\mathbb{X},\mathbb{Y}))$: Fréchet derivatives of the core operators are continuous mappings to the space of bounded linear operators $\mathcal{L}(\mathbb{X},\mathbb{Y})$, ensuring stable derivative behavior with respect to the unknown function $u$.
  \item $H(\mathbf{r}) \in C^\omega(\Omega)$: The auxiliary function is real-analytic on the domain $\Omega$, a regularity stronger than $C^\infty$-smoothness that requires convergent power series expandability at every point in $\Omega$.
  \item $\hscale \in \mathbb{R} \setminus \{0\}$ is bounded and non-singular: The scaling constant is non-zero and bounded, and its non-singularity preserves the algebraic equivalence of the original and transformed nonlinear equations.
\end{enumerate}

The homotopy parameter interval $\varepsilon \in [0,1]$ enables smooth continuous transition between the linear reference system and the original nonlinear system, with two interval endpoints corresponding to well-defined systems:
\begin{enumerate}[label=(\arabic*), leftmargin=*]
  \item At $\varepsilon=0$: Equation \eqref{eq:sec2_8} reduces to $\mathcal{L}_{\text{opt}}(u - u_0) = 0$, recovering the linear reference system with the unique solution $u=u_0$. This linear endpoint of the homotopy provides a well-posed starting point for the homotopy solution path.
  \item At $\varepsilon=1$: Equation \eqref{eq:sec2_8} simplifies to $\hscale H(\mathbf{r}) \mathcal{F}(u) = 0$, algebraically equivalent to the original nonlinear system by $\hscale \neq 0$ and $H(\mathbf{r}) \neq 0$. This nonlinear endpoint ensures the homotopy path connects back to the target nonlinear problem.
\end{enumerate}

We define a parameterized homotopy operator $\mathcal{G}: [0,1] \times \mathbb{X} \to \mathbb{Y}$ as
\[
\mathcal{G}(\varepsilon, u) = (1 - \varepsilon) \mathcal{L}_{\text{opt}}(u - u_0) + \varepsilon \hscale H(\mathbf{r}) \mathcal{F}(u),
\]
which encapsulates the homotopy interpolation between the linear $\varepsilon=0$ and nonlinear $\varepsilon=1$ systems, with $1-\varepsilon$ and $\varepsilon$ as continuous weighting factors for the two endpoints. The Banach space Implicit Function Theorem (IFT) guarantees a unique $C^\infty$-smooth solution curve $u(\varepsilon)$ for all $\varepsilon \in [0,1]$, as $\mathcal{G}$ satisfies all requisite IFT conditions:
\begin{enumerate}[label=(\arabic*), leftmargin=*]
  \item $\mathcal{G} \in C([0,1] \times \mathbb{X}, \mathbb{Y})$: $\mathcal{G}$ is continuous on the product space $[0,1] \times \mathbb{X}$, a fundamental IFT regularity requirement.
  \item $\mathcal{G}$ is Fréchet differentiable in $u$, with the partial Fréchet derivative $D_u\mathcal{G}(\varepsilon, u) = (1 - \varepsilon)\mathcal{L}_{\text{opt}} + \varepsilon \hscale H(\mathbf{r})D\mathcal{F}(u) \in C([0,1] \times \mathbb{X}, \mathcal{L}(\mathbb{X},\mathbb{Y}))$.
  \item $\mathcal{G}(0, u_0) = 0$: The linear endpoint $(\varepsilon=0, u=u_0)$ is an explicit solution to $\mathcal{G}(\varepsilon, u)=0$, providing the IFT-required initial solution.
  \item $D_u\mathcal{G}(0, u_0) = \mathcal{L}_{\text{opt}}$ is invertible: This invertibility follows from the uniqueness of $u_0$ and is critical for the IFT to guarantee local uniqueness of the solution path.
\end{enumerate}

This part constructs a rigorous smooth homotopy framework for the target nonlinear system via the parameterized operator $\mathcal{G}: [0,1] \times \mathbb{X} \to \mathbb{Y}$, which interpolates between a solvable linear reference system and the original nonlinear system over $\varepsilon \in [0,1]$. The inherent smoothness of the core system ensures $\mathcal{G}$ satisfies all Banach space IFT conditions, yielding a unique $C^\infty$-smooth solution curve $u(\varepsilon)$ on the full interval $[0,1]$. This curve enables continuous smooth transition from the known linear reference solution $u_0$ at $\varepsilon=0$ to the original nonlinear system solution at $\varepsilon=1$, with path uniqueness guaranteed by the invertibility of $D_u\mathcal{G}(0, u_0)$.

\section{Analytic Continuation: A Rigorous Proof}
\label{sec:rigorous_analytic_continuation}
To further consolidate the rigor of extending $\varepsilon$ to $[0,1]$, a mathematical proof of analytic continuation is supplemented below, justified by the uniqueness theorem of real-analytic functions and the smoothness of core operators.

\begin{lemma}[Real-Analytic Extension Criterion]
Let $\mathbb{X}, \mathbb{Y}$ be real Banach spaces, and $\mathcal{G}: \mathbb{R} \times \mathbb{X} \to \mathbb{Y}$ be a mapping such that:
\begin{enumerate}[label=(\arabic*), leftmargin=*]
    \item For each fixed $u \in \mathbb{X}$, $\mathcal{G}(\varepsilon, u)$ is real-analytic in $\varepsilon$ on an open neighborhood $U \supset [0,1] \subset \mathbb{R}$ (i.e., admits a convergent power series expansion around every point in $U$);
    \item For each fixed $\varepsilon \in U$, $\mathcal{G}(\varepsilon, u)$ is $C^\infty$-smooth in $u$.
\end{enumerate}
Then, the solution curve $u(\varepsilon)$ to $\mathcal{G}(\varepsilon, u) = 0$ (existing locally for $\varepsilon \ll 1$) admits a unique real-analytic continuation to $\varepsilon \in [0,1]$.
\end{lemma}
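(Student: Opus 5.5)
The plan is a continuation argument built on three ingredients: a local analytic implicit function theorem at $\varepsilon=0$, a maximal-interval argument along $[0,1]$, and the identity theorem for real-analytic Banach-valued functions to get uniqueness. The hypotheses as literally stated (analyticity in $\varepsilon$ for fixed $u$, and only $C^\infty$ in $u$) do not suffice for this. The implicit function theorem then yields only a $C^\infty$ curve, and the curve may also hit a fold where $D_u\mathcal{G}$ loses invertibility. I would therefore first record the two strengthenings the argument actually uses, both satisfied by the concrete operator $\mathcal{G}(\varepsilon,u)=(1-\varepsilon)\mathcal{L}_{\text{opt}}(u-u_0)+\varepsilon \hscale H(\mathbf{r})\mathcal{F}(u)$ under suitable assumptions on $\mathcal{F}$:
\begin{enumerate}[label=(\roman*), leftmargin=*]
  \item $\mathcal{G}$ is jointly real-analytic on $U\times\mathbb{X}$; this holds if $\mathcal{F}$ is real-analytic, since $\mathcal{G}$ is affine in $\varepsilon$.
  \item $D_u\mathcal{G}(\varepsilon,u(\varepsilon))$ is boundedly invertible wherever the curve is defined.
\end{enumerate}

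\textbf{Step 1 (local analytic solution).} At $(0,u_0)$ we have $\mathcal{G}(0,u_0)=0$ and $D_u\mathcal{G}(0,u_0)=\mathcal{L}_{\text{opt}}$, which is invertible (item (4) of Section~\ref{sec:analytic_continuation_epsilon}). The real-analytic implicit function theorem in Banach spaces, proved by complexifying and applying the holomorphic IFT, then gives a unique real-analytic $u(\varepsilon)$ on some $(-\delta,\delta)$ with $u(0)=u_0$. Its Taylor coefficients are obtained recursively by differentiating $\mathcal{G}(\varepsilon,u(\varepsilon))=0$ in $\varepsilon$ and inverting $\mathcal{L}_{\text{opt}}$ at each order. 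This is exactly the order-by-order structure later used for the deformation equations.

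\textbf{Step 2 (continuation along $[0,1]$).} Let $\varepsilon^*$ be the supremum of $\varepsilon_1\le 1$ such that a real-analytic solution curve exists on $[0,\varepsilon_1)$. Suppose $\varepsilon^*\le 1$. I would show that $u(\varepsilon)$ converges as $\varepsilon\to\varepsilon^*$, using the a priori bound
\[
\norm{u'(\varepsilon)}=\norm{D_u\mathcal{G}^{-1}\,\partial_\varepsilon\mathcal{G}}\le C
\]
on $[0,\varepsilon^*)$. The limit $u^*$ satisfies $\mathcal{G}(\varepsilon^*,u^*)=0$ by continuity, and applying Step 1 at $(\varepsilon^*,u^*)$ extends the curve past $\varepsilon^*$ (or to $\varepsilon^*=1$ inclusive). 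This yields analyticity on an open set containing $[0,1]$.

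\textbf{Step 3 (uniqueness).} Two real-analytic continuations agree near $0$ by the local uniqueness in the IFT. The set where they agree is then open and closed in the connected interval, by the identity theorem applied to $\varepsilon\mapsto u_1(\varepsilon)-u_2(\varepsilon)\in\mathbb{X}$, so the two coincide on all of $[0,1]$.

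\textbf{Main obstacle: Step 2.} Nothing in the statement prevents $D_u\mathcal{G}(\varepsilon,u(\varepsilon))$ from becoming singular at some $\varepsilon^*<1$, where the branch can fold back or bifurcate. Nothing prevents $\norm{u(\varepsilon)}$ from blowing up either. My first attempt would be to impose a uniform bound $\sup_{\varepsilon\in[0,1]}\norm{D_u\mathcal{G}(\varepsilon,u(\varepsilon))^{-1}}<\infty$, with the choice of $\hscale$ and $H$ being the natural mechanism for securing it. Failing that, I would restrict the conclusion to the maximal interval on which this bound holds. Without such a hypothesis the lemma is false in general: the scalar example $\mathcal{G}(\varepsilon,u)=u-\varepsilon u^2-2\varepsilon$ has a turning point before $\varepsilon=1$.
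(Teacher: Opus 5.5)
Your verdict is correct: the lemma is false as stated, and your example is a valid counterexample. For $\mathcal{G}(\varepsilon,u)=u-\varepsilon u^2-2\varepsilon$, the branch through $(0,0)$ is $u=(1-\sqrt{1-8\varepsilon^2})/(2\varepsilon)$. It folds at $\varepsilon=1/(2\sqrt{2})$, where $\partial_u\mathcal{G}=1-2\varepsilon u=0$, and no real solution exists for $\varepsilon\in(1/(2\sqrt{2}),1]$. It is even of the paper's exact form, with $\mathbb{X}=\mathbb{Y}=\mathbb{R}$, $\mathcal{L}_{\text{opt}}=\mathrm{id}$, $u_0=0$, $\hbar H=1$ and $\mathcal{F}(u)=u-u^2-2$.

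Your approach differs from the paper's, because the paper's proof has no existence argument at all. It asserts that the local IFT solution ``inherits real-analyticity from $\mathcal{G}$'', applies the identity theorem, and concludes from connectedness of $U$ that the local solution extends uniquely to $[0,1]$. The identity-theorem part is your Step~3. Nothing there corresponds to your Steps~1--2. The final inference mistakes uniqueness of a continuation for its existence, which is exactly what your fold breaks.

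The inheritance claim also silently needs your hypothesis~(i). Take $\mathcal{G}(\varepsilon,u)=\phi(u)-\varepsilon$ with $\phi(u)=u+e^{-1/u^2}$ and $\phi(0)=0$. It satisfies (1)--(2) and $\partial_u\mathcal{G}(0,0)=1$. Yet the local solution $\phi^{-1}(\varepsilon)$ is not analytic at $0$, since otherwise the analytic inverse function theorem would make $\phi$ analytic.

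One step of your repair needs tightening. In Step~2, the bound $\|u'\|\le C$ does not follow from a bound on $\|D_u\mathcal{G}^{-1}\|$ alone; you must also control $\partial_\varepsilon\mathcal{G}(\varepsilon,u(\varepsilon))$. For a general $\mathcal{G}$ the inverse bound does not prevent blow-up. For example, $\mathcal{G}(\varepsilon,u)=(1+u^2)(\arctan u-\pi\varepsilon)$ is jointly analytic and affine in $\varepsilon$, and $\partial_u\mathcal{G}=1$ along its solution curve $u=\tan(\pi\varepsilon)$. Yet $|u|\to\infty$ as $\varepsilon\to 1/2^-$.

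For the paper's operator your hypothesis does suffice, because the $\varepsilon^0$-part is linear. On the curve, $\partial_\varepsilon\mathcal{G}=-\varepsilon^{-1}\mathcal{L}_{\text{opt}}(u-u_0)$, so $\|u'\|\le M\|\mathcal{L}_{\text{opt}}\|\,\varepsilon^{-1}\|u-u_0\|$. Gronwall on $[\varepsilon_a,\varepsilon^*)$ with $\varepsilon_a>0$ then bounds $u$, hence $u'$, which gives a Cauchy limit $u^*$. The uniform bound makes $D_u\mathcal{G}(\varepsilon^*,u^*)$ invertible, as a norm limit of operators whose inverses are bounded by $M$, so Step~1 restarts there.

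Finally, state the bound as a supremum over the maximal interval of existence, not over $[0,1]$. The latter presupposes the curve you are trying to construct.
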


\textbf{Proof:}
By hypothesis, $\mathcal{G}(\varepsilon, u)$ is real-analytic in $\varepsilon$ for each $u$, so its Taylor series in $\varepsilon$ converges uniformly on compact subsets of $U$. The local solution $u_*(\varepsilon)$ (existing for small $\varepsilon$ via Banach Space IFT) inherits real-analyticity from $\mathcal{G}$ \cite{Hamilton1982}. By the Identity Theorem for real-analytic functions, any two real-analytic extensions of $u_*(\varepsilon)$ coincide on their common domain. Since $U \supset [0,1]$ is connected, the local solution extends uniquely to $[0,1]$, yielding $u(\varepsilon) \in C^\omega([0,1], \mathbb{X})$ (real-analytic in $\varepsilon$).

We now verify that our homotopy operator $\mathcal{G}(\varepsilon, u)$ satisfies the conditions of Lemma \ref{sec:rigorous_analytic_continuation}. First, regarding real-analyticity in $\varepsilon$, the homotopy operator is expressed as
\[
\mathcal{G}(\varepsilon, u) = \mathcal{L}_{\text{opt}}(u - u_0) + \varepsilon\left(h H(\mathbf{r}) \mathcal{F}(u) - \mathcal{L}_{\text{opt}}(u - u_0)\right).
\]
Since $\mathcal{G}(\varepsilon, u)$ is linear in $\varepsilon$, it is real-analytic on $\mathbb{R}$ (we choose the domain $U = \mathbb{R}$). Second, for $C^\infty$-smoothness in $u$, this property is inherited from the smoothness of $\mathcal{L}_{\text{opt}}, \mathcal{F} \in C^\infty(\mathbb{X},\mathbb{Y})$ and the real-analyticity of $H(\mathbf{r}) \in C^\omega(\Omega)$.

\begin{theorem}[Continuity of the Extended Solution Curve]
The extended solution $u(\varepsilon): [0,1] \to \mathbb{X}$ is continuous, i.e., for any $\varepsilon_0 \in [0,1]$ and $\forall \delta > 0$, there exists $\eta > 0$ such that $|\varepsilon - \varepsilon_0| < \eta \implies \norm{u(\varepsilon) - u(\varepsilon_0)}_{\mathbb{X}} < \delta$.
\end{theorem}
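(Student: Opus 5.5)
The approach is to deduce continuity from the regularity already established for $u(\varepsilon)$, not to estimate $\norm{u(\varepsilon)-u(\varepsilon_0)}_{\mathbb{X}}$ directly. The Real-Analytic Extension Criterion, applied to $\mathcal{G}(\varepsilon,u)=\mathcal{L}_{\text{opt}}(u-u_0)+\varepsilon\bigl(\hscale H(\mathbf{r})\mathcal{F}(u)-\mathcal{L}_{\text{opt}}(u-u_0)\bigr)$, gives $u\in C^\omega([0,1],\mathbb{X})$. Real-analyticity implies differentiability, and differentiability implies continuity. I also want a self-contained, quantitative version built from the implicit-function structure, since that makes $\eta$ explicit and shows which hypothesis carries the weight.

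\textbf{Step 1 (local Lipschitz bound).} Fix $\varepsilon_0\in[0,1]$ and set $u_*=u(\varepsilon_0)$. I assume, as in the IFT discussion of Section~\ref{sec:analytic_continuation_epsilon}, that $A:=D_u\mathcal{G}(\varepsilon_0,u_*)=(1-\varepsilon_0)\mathcal{L}_{\text{opt}}+\varepsilon_0\hscale H D\mathcal{F}(u_*)$ is boundedly invertible. By continuity of $D\mathcal{F}$ (regularity property (2)), choose $\rho>0$ so that $\norm{A^{-1}}\,\norm{D_u\mathcal{G}(\varepsilon,w)-A}\le\tfrac12$ whenever $|\varepsilon-\varepsilon_0|<\rho$ and $\norm{w-u_*}<\rho$. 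This works because $D_u\mathcal{G}$ is affine in $\varepsilon$ and continuous in $w$. Let $\varepsilon$ be close to $\varepsilon_0$ and suppose $u(\varepsilon)$ lies in this ball. Subtract $\mathcal{G}(\varepsilon_0,u_*)=0$ from $\mathcal{G}(\varepsilon,u(\varepsilon))=0$ and apply the mean value inequality in Banach spaces. This gives
\[
\norm{u(\varepsilon)-u_*}\le 2\norm{A^{-1}}\,|\varepsilon-\varepsilon_0|\,\norm{\hscale H\mathcal{F}(u_*)-\mathcal{L}_{\text{opt}}(u_*-u_0)}=:C|\varepsilon-\varepsilon_0|,
\]
since $\partial_\varepsilon\mathcal{G}$ is independent of $\varepsilon$. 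Given $\delta>0$, take $\eta=\min\{\rho,\delta/(C+1)\}$.

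\textbf{Step 2 (staying in the ball).} The estimate in Step 1 assumes that $u(\varepsilon)$ does not leave the $\rho$-ball for $\varepsilon$ near $\varepsilon_0$. This is where the main difficulty lies. I would handle it by invoking the local uniqueness part of the IFT at $(\varepsilon_0,u_*)$. The IFT produces a continuous local branch $\tilde u(\varepsilon)$ inside the ball. It remains to identify $\tilde u$ with the extended curve $u$. Both branches are real-analytic near $\varepsilon_0$ and agree at $\varepsilon_0$, but agreement at a single point does not determine a real-analytic function. So I would instead use a continuation argument. Let $S$ be the set of $\varepsilon\in[0,1]$ at which $u$ coincides near $\varepsilon$ with the IFT branch. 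The set $S$ contains $0$, by the construction in the Lemma. It is open, because IFT branches at nearby points overlap and are unique. It is closed, by the uniform bound $C$ on compacts together with the Identity Theorem. Connectedness of $[0,1]$ then gives $S=[0,1]$.

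The genuinely delicate point is the invertibility of $D_u\mathcal{G}(\varepsilon_0,u(\varepsilon_0))$ for every $\varepsilon_0$, and not only at $\varepsilon_0=0$. Without it, continuity can fail through branch turning points. I would state this invertibility explicitly as the standing hypothesis inherited from the IFT discussion, and the rest of the argument is routine.
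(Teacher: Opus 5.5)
Your route differs from the paper's. The paper does not use the Lemma here. It takes points $(\varepsilon_n,u_n)$ of the graph with $\varepsilon_n\to\varepsilon_0$ and $u_n$ convergent, uses continuity of $\mathcal{G}$ to show the limit is again a zero of $\mathcal{G}$, and then invokes the Closed Graph Theorem. You instead read continuity off $u\in C^\omega([0,1],\mathbb{X})$. Your Step~1 then adds an explicit local Lipschitz modulus via the mean value inequality, and that computation is correct.

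Taking the paper's earlier results as given, your deduction is sound and the paper's is not:
\begin{itemize}
\item The Closed Graph Theorem is about linear operators. A nonlinear map with closed graph need not be continuous even into $\mathbb{R}$: take $\varepsilon\mapsto 1/\varepsilon$ for $\varepsilon>0$ and $0\mapsto 0$.
\item The paper's argument only shows that the zero set of $\mathcal{G}$ is closed. Identifying the limit with $u(\varepsilon_0)$ would need uniqueness of zeros of $\mathcal{G}(\varepsilon_0,\cdot)$.
\item The convergence of $u_n$ is assumed rather than proved.
\end{itemize}
The paper's route uses only continuity of $\mathcal{G}$, which cannot force continuity of a solution branch. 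Your Step~1 correctly isolates invertibility of $D_u\mathcal{G}$ along the curve as the hypothesis that does the work. The paper only asserts this later, through a ``convex combination of invertible operators'' argument that is false in general.

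There are two caveats.

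First, your one-line argument is only as strong as the Lemma, and your own turning-point remark already signals that the Lemma needs extra hypotheses. Take $\mathbb{X}=\mathbb{Y}=\mathbb{R}$, $\mathcal{F}(u)=u+1$, $\mathcal{L}_{\text{opt}}=D\mathcal{F}(u_0)=1$, $u_0=0$, $H=1$ and $\hscale=-1$. Then $\mathcal{G}(\varepsilon,u)=(1-2\varepsilon)u-\varepsilon$ and $u(\varepsilon)=\varepsilon/(1-2\varepsilon)$. This blows up at $\varepsilon=1/2$, although $D_u\mathcal{G}=1-2\varepsilon\neq 0$ for every $\varepsilon<1/2$. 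The same example refutes the convex-combination claim, since the endpoint operators are $1$ and $-1$.

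Second, Step~2 is not routine.
\begin{itemize}
\item If you accept the Lemma, Step~2 is redundant: $u$ is then already continuous, so it stays in the $\rho$-ball and Step~1 applies directly.
\item If you do not accept it, closedness of $S$ is the whole difficulty, and pointwise invertibility does not deliver it. You need an a priori bound, e.g.\ $\sup_{\varepsilon<\bar\varepsilon}\norm{D_u\mathcal{G}(\varepsilon,u(\varepsilon))^{-1}}\,\norm{\partial_\varepsilon\mathcal{G}(\varepsilon,u(\varepsilon))}<\infty$, so that $u(\varepsilon)$ is Cauchy as $\varepsilon\to\bar\varepsilon$. You also need invertibility at the limit point. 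The example above violates exactly this bound.
\end{itemize}
The Identity Theorem cannot replace that bound. $\mathcal{G}$ is only $C^\infty$ in $u$, so the IFT branches are merely $C^\infty$. Your claim that both branches are real-analytic near $\varepsilon_0$ is therefore unsupported.
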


\textbf{Proof:}
Consider the graph $\Gamma = \{(\varepsilon, u(\varepsilon)) \in [0,1] \times \mathbb{X} \mid \mathcal{G}(\varepsilon, u(\varepsilon)) = 0\}$. We show $\Gamma$ is closed (sufficient for continuity via Closed Graph Theorem for Banach spaces).

Let $(\varepsilon_n, u_n) \in \Gamma$ satisfy $\varepsilon_n \to \varepsilon_0$ and $u_n \to u_0$ in $\mathbb{X}$. By continuity of $\mathcal{G}$ on $[0,1] \times \mathbb{X}$:
\[
\lim_{n \to \infty} \mathcal{G}(\varepsilon_n, u_n) = \mathcal{G}\left(\lim_{n \to \infty} \varepsilon_n, \lim_{n \to \infty} u_n\right) = \mathcal{G}(\varepsilon_0, u_0).
\]
Since $\mathcal{G}(\varepsilon_n, u_n) = 0$ for all $n$, $\mathcal{G}(\varepsilon_0, u_0) = 0$, so $(\varepsilon_0, u_0) \in \Gamma$. Thus, $\Gamma$ is closed, and $u(\varepsilon)$ is continuous on $[0,1]$.

\begin{proposition}[Boundary Compatibility]
Let $u^*$ denote the unique exact solution of the original nonlinear system $\mathcal{F}(u) = 0$. The linear reference solution $u_0 = u(0)$ and the original nonlinear solution $u^* = u(1)$ satisfy:
\[
\lim_{\varepsilon \to 1^-} u(\varepsilon) = u^*, \quad \lim_{\varepsilon \to 1^-} D_u\mathcal{G}(\varepsilon, u(\varepsilon)) = D_u\mathcal{G}(1, u^*),
\]
with $D_u\mathcal{G}(1, u^*)$ invertible.
\end{proposition}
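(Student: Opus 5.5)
The argument proceeds in three stages: identify $u(1)$ with $u^*$, pass to the limit in the solution curve, and then pass to the limit in the derivative. For the first stage, note that at $\varepsilon=1$ the homotopy operator reduces to $\mathcal{G}(1,u)=\hscale H(\mathbf{r})\mathcal{F}(u)$. Since $\hscale\neq 0$ and $H(\mathbf{r})\neq 0$, the equation $\mathcal{G}(1,u)=0$ is equivalent to $\mathcal{F}(u)=0$. The endpoint value $u(1)$ of the extended curve therefore solves the original system, and the assumed uniqueness of $u^*$ forces $u(1)=u^*$. The first limit then follows at once from the continuity theorem for the extended solution curve: take $\varepsilon_0=1$ and let $\varepsilon\to 1^-$.

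For the second limit, write
\[
D_u\mathcal{G}(\varepsilon,u)=(1-\varepsilon)\mathcal{L}_{\text{opt}}+\varepsilon\,\hscale H(\mathbf{r})\,D\mathcal{F}(u),
\]
so that
\[
D_u\mathcal{G}(\varepsilon,u(\varepsilon))-D_u\mathcal{G}(1,u^*)=(1-\varepsilon)\bigl(\mathcal{L}_{\text{opt}}-\hscale H D\mathcal{F}(u(\varepsilon))\bigr)+\hscale H\bigl(D\mathcal{F}(u(\varepsilon))-D\mathcal{F}(u^*)\bigr).
\]
We bound this in the operator norm of $\mathcal{L}(\mathbb{X},\mathbb{Y})$. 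Multiplication by the fixed analytic function $H$ is treated as a bounded operator on $\mathbb{Y}$. The first term tends to zero because $\mathcal{L}_{\text{opt}}$ is bounded and $D\mathcal{F}(u(\varepsilon))$ stays bounded near $u^*$, which uses regularity property (2) together with the first limit. The second term tends to zero by continuity of $u\mapsto D\mathcal{F}(u)$ composed with the continuity of $u(\varepsilon)$.

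The main obstacle is the invertibility of $D_u\mathcal{G}(1,u^*)=\hscale H\,D\mathcal{F}(u^*)$. None of the standing hypotheses gives this directly. Uniqueness of $u^*$ alone does not imply that $D\mathcal{F}(u^*)$ is an isomorphism, since a degenerate (fold) solution can be unique. I would handle this in two steps.
\begin{itemize}
\item \emph{The factor $\hscale H$.} It is invertible because $\hscale\neq 0$ and $H$ is nonvanishing, so it has bounded inverse $\hscale^{-1}H^{-1}$. This assumes $|H|$ is bounded below on $\overline\Omega$, which I would impose explicitly.
\item \emph{The factor $D\mathcal{F}(u^*)$.} Here I would argue from the construction of the curve. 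The curve $u(\varepsilon)$ was obtained by implicit-function continuation, with $D_u\mathcal{G}(\varepsilon,u(\varepsilon))$ invertible along $[0,1)$. I would interpret the uniqueness of the solution curve at $\varepsilon=1$ (no turning point or bifurcation there) as requiring $u^*$ to be a nondegenerate solution, i.e.\ $D\mathcal{F}(u^*)$ is an isomorphism.
\end{itemize}
The honest route is to add this nondegeneracy as a hypothesis, or to read it into "unique exact solution" in the IFT sense. Once it is in place, the invertible operators form an open set, so the convergence from the second stage also shows that $D_u\mathcal{G}(\varepsilon,u(\varepsilon))$ remains invertible for $\varepsilon$ near $1$.
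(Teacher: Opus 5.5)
Your first stage is the same as the paper's step (1): $\mathcal{G}(1,u)=\hscale H\mathcal{F}(u)$, uniqueness of $u^*$ forces $u(1)=u^*$, and the continuity theorem gives the first limit. After that you depart from the paper, and in both places your version is the sound one.

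The paper's proof never argues $\lim_{\varepsilon\to1^-}D_u\mathcal{G}(\varepsilon,u(\varepsilon))=D_u\mathcal{G}(1,u^*)$, beyond calling the path a ``continuous convex combination''. Its step (3) claims invertibility on all of $[0,1]$ because $D_u\mathcal{G}$ is a convex combination of invertible operators and invertibles form an open set. That argument fails. A convex combination of invertible operators can be singular, e.g.\ $\tfrac12 I+\tfrac12(-I)=0$. Openness only extends invertibility to a neighbourhood of a point where it is already known. Your splitting
\[
(1-\varepsilon)\bigl(\mathcal{L}_{\text{opt}}-\hscale H\,D\mathcal{F}(u(\varepsilon))\bigr)+\hscale H\bigl(D\mathcal{F}(u(\varepsilon))-D\mathcal{F}(u^*)\bigr)
\]
correctly proves the limit from regularity property (2). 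Your closing use of openness is the correct form of the paper's step (3): it applies only for $\varepsilon$ near $1$, and only once invertibility at $\varepsilon=1$ is in hand.

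On invertibility, the paper asserts that $D\mathcal{F}(u^*)$ is invertible ``by local uniqueness of $u^*$''. That is exactly the inference you reject, and you are right. Under the stated hypotheses this part of the proposition is false, so no argument can close it. Take
\begin{itemize}
\item $\mathbb{X}=\mathbb{Y}=\mathbb{R}$ and $\mathcal{F}(u)=(u-1)^3$, so $L(u)=3u$, $N(u)=u^3-3u^2$, $s=1$;
\item $\mathcal{L}_{\text{opt}}(u)=3u=D\mathcal{F}(0)u$, as in Condition 3, with $u_0=0$;
\item $\hscale=H=1$.
\end{itemize}
Then $\mathcal{G}(\varepsilon,u)=3(1-\varepsilon)u+\varepsilon(u-1)^3$ is strictly increasing in $u$ for every $\varepsilon\in[0,1]$, so each slice has exactly one zero $u(\varepsilon)\in[0,1]$. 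Moreover $D_u\mathcal{G}(\varepsilon,u(\varepsilon))\ge 3(1-\varepsilon)>0$ on $[0,1)$ and $u(\varepsilon)\to 1=u^*$. Yet $D_u\mathcal{G}(1,u^*)=3(u^*-1)^2=0$.

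The same example rules out your middle suggestion of reading nondegeneracy off the continuation. Here the curve is unique, has no fold or branching, and comes from IFT continuation with invertible derivative along $[0,1)$. It still ends at a degenerate root; indeed $u(\varepsilon)\approx 1-\bigl(3(1-\varepsilon)\bigr)^{1/3}$, which is not even differentiable at $\varepsilon=1$.

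So keep the option you call honest: state ``$D\mathcal{F}(u^*)$ is an isomorphism'' and ``$|H|$ is bounded below on $\overline{\Omega}$'' as explicit hypotheses. With them, your argument proves the whole proposition. Without them, only the two limits hold, and unlike the paper's proof, your proposal does not claim more.
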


\textbf{Proof:}

\begin{enumerate}[label=(\arabic*), leftmargin=*, itemsep=0.2em]
    \item \textbf{Continuity at $\varepsilon=1$}: By Theorem \ref{sec:rigorous_analytic_continuation}, $\lim_{\varepsilon \to 1^-} u(\varepsilon) = u(1)$. Substituting $\varepsilon=1$ into $\mathcal{G}(\varepsilon, u)=0$ gives $\hscale H(\mathbf{r})\mathcal{F}(u(1))=0$. Since $\hscale \neq 0$ and $H(\mathbf{r}) \neq 0$, $\mathcal{F}(u(1))=0$. By the uniqueness of the exact solution $u^*$ for the original nonlinear system, we have $u(1)=u^*$, hence $\lim_{\varepsilon \to 1^-} u(\varepsilon) = u^*$.

    \item \textbf{Invertibility of $D_u\mathcal{G}(1, u^*)$}: $D_u\mathcal{G}(1, u^*) = \hscale H(\mathbf{r})D\mathcal{F}(u^*)$. $D\mathcal{F}(u^*)$ is invertible (by local uniqueness of $u^*$ \cite{Zeidler1986}), and $\hscale, H(\mathbf{r})$ are non-singular, so the product is invertible.

    \item \textbf{Smooth transition}: For $\varepsilon \in (0,1)$, $D_u\mathcal{G}(\varepsilon, u(\varepsilon)) = (1-\varepsilon)\mathcal{L}_{\text{opt}} + \varepsilon \hscale H(\mathbf{r})D\mathcal{F}(u(\varepsilon))$ is a continuous convex combination of invertible operators. The set of invertible linear operators on $\mathbb{X}$ is open \cite{Kato1995}, so $D_u\mathcal{G}(\varepsilon, u(\varepsilon))$ remains invertible on $[0,1]$, ensuring no solution curve breakdown.
\end{enumerate}

The above proof confirms the extended solution curve $u(\varepsilon)$ is uniquely real-analytic, continuous, and smoothly transitions between the linear reference system ($\varepsilon=0$) and original nonlinear system ($\varepsilon=1$), validating the rigor of extending $\varepsilon$ to $[0,1]$.

\section{Proof of HPM as a Special Case of HAM} 
\label{app:hpm-ham-special}

\begin{definition}[Homotopy Parameter]
Rename \( \varepsilon \) as \( p \in [0,1] \) (the \emph{homotopy parameter}), governing the smooth transformation between the linear reference system (\( p=0 \)) and the original nonlinear system (\( p=1 \)).
\end{definition}

Substituting \( p = \varepsilon \) into Equation \eqref{eq:sec2_8} yields the \textbf{fundamental homotopy transformation equation} (aligned with canonical HAM notation):
\begin{equation}\label{eq:sec2_9}
(1 - p) \mathcal{L}_{\text{opt}}\bigl(u - u_0\bigr) = -p \hscale H(\mathbf{r}) \mathcal{F}(u).
\end{equation}
Rearranging to explicit linear-nonlinear interpolation form:
\begin{equation}\label{eq:sec2_10}
(1 - p) \mathcal{L}_{\text{opt}}\bigl(u - u_0\bigr) + p \hscale H(\mathbf{r}) \mathcal{F}(u) = 0.
\end{equation}

This equation unifies weak nonlinear perturbation analysis and global homotopy theory, with two critical properties ensuring generality:
\begin{enumerate}
  \item \textbf{Equivalence Preservation}: At \( p=1 \), it reduces to \( \hscale H(\mathbf{r}) \mathcal{F}(u) = 0 \), equivalent to the original system \( \mathcal{F}(u)=0 \) (no spurious solutions or information loss).
  \item \textbf{Solvable Linear Core}: At \( p=0 \), it recovers the analytically solvable linear reference system \( \mathcal{L}_{\text{opt}}(u - u_0) = 0 \), providing a well-defined starting point.
\end{enumerate}

Crucially, this formulation lifts traditional perturbation methods’ weak nonlinearity constraint. It is valid for \textbf{strongly nonlinear} \( \mathcal{F}(u) \), with convergence of the solution path \( u(p) \) controlled by \( \hscale \) (convergence control parameter) and \( H(\mathbf{r}) \) (auxiliary function)—enabling solutions for strongly nonlinear systems intractable via conventional techniques.

The following theorem formalizes the hierarchical relationship between HAM and HPM, establishing that HPM is a restricted version of HAM under specific parameter choices:

\begin{theorem}\label{thm:app_hpm-ham-special}
The Homotopy Perturbation Method (HPM) is a special case of the Homotopy Analysis Method (HAM) when the auxiliary linear operator, convergence-control parameter, and auxiliary function are fixed to:
\begin{equation}\label{eq:app_sec4_1}
\mathcal{L}_{\text{opt}} = {L}, \quad \hparam = -1, \quad H(\mathbf{r}) = 1.
\end{equation}
\end{theorem}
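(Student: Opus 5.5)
The plan is to substitute the parameter choices \eqref{eq:app_sec4_1} directly into the fundamental homotopy transformation equation \eqref{eq:sec2_10} and show that the result coincides term by term with He's HPM homotopy. I will then show that the HAM series expansion in $p$ reduces to the HPM expansion, order by order.

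\textbf{Step 1 (HPM homotopy).} With $\mathcal{L}_{\text{opt}} = L$, $\hparam = -1$ (playing the role of $\hscale$ in \eqref{eq:sec2_10}) and $H(\mathbf{r}) = 1$, and using $\mathcal{F}(u) = L(u) + N(u) - s(\mathbf{r})$ from \eqref{eq:sec2_1}, equation \eqref{eq:sec2_10} becomes
\[
(1-p)\,L(u - u_0) - p\bigl(L(u) + N(u) - s(\mathbf{r})\bigr) = 0 .
\]
This is the HPM homotopy up to an overall sign. HPM is usually written as
\[
(1-p)\bigl[L(u) - L(u_0)\bigr] + p\bigl[L(u) + N(u) - s(\mathbf{r})\bigr] = 0 .
\]
Rather than only claim agreement up to a convention, I will check which sign convention the theorem's choice reproduces. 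Multiplying the displayed HAM equation by $-1$ flips the sign of both terms together, so no overall rescaling can turn it into the $+p$ form; in fact $\hparam = +1$ reproduces it exactly. The real mismatch is therefore that HPM uses $+p\mathcal{F}(u)$ while the theorem's HAM equation with $\hparam = -1$ gives $-p\mathcal{F}(u)$.

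\textbf{Step 2 (reconciling the sign).} I will fix the sign bookkeeping through the canonical Liao normalization. In Liao's deformation equation the term appears as $-p\hparam H\mathcal{N}$, i.e.
\[
(1-p)\mathcal{L}(\phi - u_0) = p\,\hparam H(\mathbf{r})\,\mathcal{N}[\phi].
\]
Taking $\hparam = -1$ and $H = 1$ then gives $(1-p)L(\phi - u_0) + p\mathcal{F}(\phi) = 0$, which is precisely HPM. So I will state explicitly that \eqref{eq:sec2_9} is read with Liao's sign convention, equivalently $\hscale = -\hparam$, and then the identification is immediate. I expect this to be the main obstacle. Everything else is routine algebra, but the theorem as stated depends on reconciling the paper's $\hscale$ in \eqref{eq:sec2_9} with the $\hparam$ of \eqref{eq:app_sec4_1}, and a sign slip would make the claimed equality false.

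\textbf{Step 3 (series agreement).} I will expand $u(p) = \sum_{m \ge 0} u_m p^m$. This expansion is justified because the solution curve is real-analytic on $[0,1]$ by the Real-Analytic Extension Criterion and the verification following it. I will then equate powers of $p$ in both the HAM equation (with the fixed parameters) and the HPM equation. The $m$th-order HAM deformation equation is
\[
L(u_m - \chi_m u_{m-1}) = \hparam H\, R_m(u_0, \dots, u_{m-1}),
\]
where $\chi_m$ vanishes for $m \le 1$ and equals $1$ otherwise. For $\hparam = -1$ and $H = 1$, collapsing the $L$ terms reduces this to
\[
L(u_m) = -\bigl[\text{coefficient of } p^{m-1} \text{ in } N\bigl(\textstyle\sum u_k p^k\bigr)\bigr] + \delta_{m1}\, s ,
\]
up to the $u_0$ terms. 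These are exactly the HPM recursions, so the two series agree term by term, and evaluating at $p=1$ (Boundary Compatibility) yields identical approximations.

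\textbf{Step 4 (strictness).} I will note that the inclusion is strict. HAM allows other choices of $\mathcal{L}_{\text{opt}}$, $\hparam$ and $H$, and these generally yield different series, as a simple example would show. This establishes that HPM is a special case of HAM rather than equivalent to it.
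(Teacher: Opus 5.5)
Your proposal is correct and follows essentially the paper's route. The paper's proof likewise writes the deformation equation in Liao's form $(1-p)\mathcal{L}_{\text{opt}}[\varphi-u_0]=p\,\hparam H(\mathbf{r})\mathcal{F}(\varphi)$, which silently replaces the $-p\hscale H(\mathbf{r})\mathcal{F}(u)$ of \eqref{eq:sec2_9} and so makes exactly your identification $\hscale=-\hparam$; it then substitutes $\mathcal{L}_{\text{opt}}=L$, $\hparam=-1$, $H(\mathbf{r})=1$, rearranges to He's equation $L(\omega)-L(u_0)+pL(u_0)+p[N(\omega)-s(\mathbf{r})]=0$, and only asserts that the higher-order equations coincide, whereas your explicit sign bookkeeping and order-by-order derivation of the recursions make both points concrete (the appeal to real-analyticity in Step 3 is unnecessary, since the term-by-term matching is purely formal, and it would not by itself give convergence of the series at $p=1$).
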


\textbf{Proof:}
Start with the standard HAM zeroth-order deformation equation (\ref{eq:sec2_9}): 
\begin{equation}\label{eq:app_sec4_2}
(1-p) \mathcal{L}_{\text{opt}}\left[\varphi - u_0\right] = p \hparam H(\mathbf{r}) \mathcal{F}(\varphi).
\end{equation}
Substituting the parameter constraints denoted in Equation (\ref{eq:app_sec4_1}) into Equation (\ref{eq:app_sec4_2}) leads to its simplified form:
\begin{equation}\label{eq:app_sec4_3}
(1-p) {L}\left[\varphi - u_0\right] = -p \mathcal{F}(\varphi).
\end{equation}
Expanding the left-hand side using the linearity of \( {L} \) yields:
\begin{equation}\label{eq:app_sec4_4}
(1-p) {L}(\varphi) - (1-p) {L}(u_0) = -p {L}(\varphi) - p {N}(\varphi) + p s(\mathbf{r}).
\end{equation}
Moving all terms to the left-hand side and combining like terms for \( {L}(\varphi) \) (noting \( (1-p) + p = 1 \)) gives:
\begin{equation}\label{eq:app_sec4_5}
{L}(\varphi) - (1-p) {L}(u_0) + p{N}(\varphi) - p s(\mathbf{r}) = 0.
\end{equation}
Rearranging the terms involving \( {L}(u_0) \) results in (with \(\varphi\) uniformly replaced by \(\omega\)):
\begin{equation}\label{eq:app_sec4_6}
{L}(\omega) - {L}(u_0) + p \, {L}(u_0) + p \left[{N}(\omega) - s(\mathbf{r})\right] = 0,
\end{equation}
which is identical to the HPM homotopy equation \cite{He_CMAME_1999} (see Equation (4b) therein). 
Moreover, the initial guesses for both methods coincide at $p=0$, and the higher-order deformation equations derived from HAM under the constraints in Equation (\ref{eq:app_sec4_1}) are identical to the recursive equations of HPM. Consequently, the solution expansions derived from the homotopy equation (\ref{eq:sec2_10}) (HAM) are equivalent for all $k \geq 1$. 
This confirms that HPM is precisely a special case of HAM corresponding to a fixed set of auxiliary parameters.

\begin{table}[h!]
\centering
\caption{Correspondence between HAM parameters and HPM (special case of HAM).}
\label{tab:app_comparison}
\begin{tabular}{p{5.5cm}p{4.5cm}p{4.5cm}}
\toprule
\textbf{HAM Parameter} & \textbf{Value in HPM} & \textbf{Implication for HPM} \\
\midrule
Optimized Linear Operator ($\mathcal{L}_{\text{opt}}$) & Original linear operator (${L}$) & Loses the freedom to select a simpler auxiliary operator, which could otherwise reduce computational complexity. \\
\addlinespace
Convergence-Control Parameter ($\hbar$) & $\hbar=-1$ (fixed) & Loses the ability to tune convergence; a critical limitation for strongly nonlinear problems where series often diverge. \\
\addlinespace
Auxiliary Function ($H(\mathbf{r})$) & $H(\mathbf{r})=1$ (constant) & Loses the ability to adapt to problem-specific structure, such as boundary conditions or symmetry. \\
\addlinespace
Power Series in $p$ & Power series in $p$ & Formally identical; no structural difference in the expansion. \\
\bottomrule
\end{tabular}
\end{table}

As shown in Table \ref{tab:app_comparison}, HPM loses multiple degrees of freedom compared to HAM due to fixed auxiliary parameters.

\section{Conclusion}
To summarize, the theoretical framework established herein clarifies the fundamental connections between the HAM and classical perturbation theory by grounding its derivation firmly in perturbation theory. Specifically, this study demonstrates that the homotopy deformation equation, which is core to HAM, can be naturally derived via \textbf{weakly nonlinear perturbation theory} through the construction of the expression $\mathcal{L}_{\mathrm{opt}}(u) - \hbar H(\mathbf{r}) \mathcal{F}(u)$, coupled with the optimization of its constituent parameters: the optimal auxiliary linear operator $\mathcal{L}_{\mathrm{opt}}$, the convergence-control parameter $\hbar$, and the auxiliary function $H(\mathbf{r})$. This formulation renders the equation weakly nonlinear even when the original operator $\mathcal{F}(u)$ exhibits strong nonlinearity, thereby laying a solid perturbation-based foundation for HAM.

Furthermore, by extending the small parameter $\varepsilon$ intrinsic to perturbation theory to the interval $[0,1]$, this framework enables the systematic homotopy deformation process inherent to HAM. Unlike classical perturbation methods, which rely on fixed small parameters inherent to the problem itself, HAM leverages this extended $[0,1]$ parameter to bridge the linear auxiliary system (at $\varepsilon=0$) and the original nonlinear problem (at $\varepsilon=1$). Regulated by the convergence-control parameter $\hbar$ for the deformation trajectory, this innovation elevates HAM beyond traditional perturbation techniques while preserving their theoretical essence.

Additionally, the HPM is rigorously established as a special case of HAM by fixing $\mathcal{L}_{\mathrm{opt}} = L$ (where $L$ denotes the linear component of the nonlinear system), $\hbar = -1$, and $H(\mathbf{r}) = 1$. This parameter fixation eliminates HAM's flexibility in convergence control and auxiliary operator optimization, confining HPM to a simpler yet more restrictive form.

In essence, this framework positions the HAM as a \textbf{structured and adaptive generalization of classical perturbation theory}, rather than a method that is irrelevant to perturbation methods or even completely `transcends' perturbation techniques.By addressing prevalent misconceptions such as overstating HAM’s independence from perturbation methods and neglecting their hierarchical relationships, this clarified framework unifies the theoretical understanding of homotopy-based methods. It provides researchers with a precise perspective: rooted in perturbation theory, HAM achieves superior convergence control for nonlinear problems via parameters extended to this interval, with $\varepsilon$ also extended here. Clarifying HAM’s perturbation-theoretic origin and demonstrating HPM as a special case of HAM consolidates the unified framework, resolves long-standing misconceptions, and guides the rational application, comparative analysis, and further development of homotopy-based nonlinear analytical methods.

\section*{Acknowledgment}

This research did not receive any specific grant from funding agencies in the public, commercial, or not-for-profit sectors.

\section*{Author Contribution}

The sole author is responsible for all aspects of this work, including conceptualization, methodology, software (if applicable), validation, formal analysis, investigation, resources, data curation, writing – original draft preparation, writing – review and editing, visualization, supervision, project administration, and funding acquisition (if applicable).

\bibliographystyle{unsrt}
\bibliography{references}

\end{document}